\newtheorem{theorem}{Theorem}[section]
\newtheorem{thmy}{Theorem}
\newtheorem{lemma}[theorem]{Lemma}
\def\barr{\begin{array}}
\def\earr{\end{array}}
\title{Detecting structural properties of finite groups by the sum of element orders}
\author{Marius T\u arn\u auceanu}
\date{April 6, 2019}
\begin{document}

\maketitle

\begin{abstract}
In this paper, we introduce a new function related to the sum of element orders of finite groups. It is used
to give some criteria for a finite group to be cyclic, abelian, nilpotent, supersolvable and solvable, respectively.
\end{abstract}

{\small
\noindent
{\bf MSC2000\,:} Primary 20D60; Secondary 20D10, 20D15, 20F16, 20F18.

\noindent
{\bf Key words\,:} group element orders, cyclic groups, abelian groups, nilpotent groups, supersolvable groups, solvable groups.}

\section{Introduction}
Given a finite group $G$, we consider the function
\begin{equation}
\psi(G)=\sum_{x\in G}o(x),\nonumber
\end{equation}where $o(x)$ denotes the order of $x$. This has been introduced by H. Amiri, S.M. Jafarian Amiri and I.M. Isaacs \cite{1}. They proved the following theorem:

\begin{thmy}
If $G$ is a group of order $n$, then $\psi(G)\leq\psi(C_n)$, and we have equality if and only if $G$ is cyclic.
\end{thmy}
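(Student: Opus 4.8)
The plan is to prove the slightly sharper statement by induction on $n=|G|$: if $G$ is non-cyclic, then $\psi(G)<\psi(C_n)$. Two standard facts drive everything. First, grouping the elements of a cyclic group by order gives $\psi(C_n)=\sum_{d\mid n}d\,\varphi(d)$, and since $d\mapsto d\,\varphi(d)$ is multiplicative this equals $\prod_{p^a\parallel n}\psi(C_{p^a})$. Second, $\psi(A\times B)=\psi(A)\,\psi(B)$ whenever $\gcd(|A|,|B|)=1$, since then $o((a,b))=o(a)\,o(b)$. With these in hand the cyclic case is exactly the equality case, so I may assume $G$ non-cyclic and the statement known for all smaller groups.

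The $p$-group case is immediate: a non-cyclic group of order $p^a$ (so $a\ge 2$) has no element of order $p^a$, hence every element has order at most $p^{a-1}$, and therefore
\[
\psi(G)\le 1+(p^a-1)\,p^{a-1}=1+p^{2a-1}-p^{a-1}<p^{2a-1}\le p^a\varphi(p^a)\le\psi(C_{p^a}),
\]
the strict step using $p^{a-1}>1$. The nilpotent case then follows: a non-cyclic nilpotent $G$ is the direct product $P_1\times\cdots\times P_r$ of its Sylow subgroups, with some $P_i$ non-cyclic, so by the $p$-group case and multiplicativity $\psi(G)=\prod_i\psi(P_i)<\prod_i\psi(C_{|P_i|})=\psi(C_n)$.

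The real work is the non-nilpotent case. Assuming first that $G$ is solvable, take $N\trianglelefteq G$ minimal normal, so $N$ is elementary abelian of exponent $p$, say $|N|=p^k$. For $x\in G$ with image of order $m$ in $G/N$ one has $o(x)=m\cdot o(x^m)$ with $x^m\in N$, hence $o(x^m)\in\{1,p\}$; summing over $G$ and separating the elements with $x^m=1$ (equivalently $\langle x\rangle\cap N=1$) from the rest yields the identity
\[
\psi(G)=p\,|N|\,\psi(G/N)-(p-1)\,\psi_T,\qquad \psi_T:=\sum_{x:\,\langle x\rangle\cap N=1} o(x).
\]
Combining this with the inductive bound $\psi(G/N)\le\psi(C_{n/|N|})$ and the elementary computation that $p^{k+1}\psi(C_{p^{a-k}})\le\psi(C_{p^a})$ for $k\ge 2$ disposes of those cases; the delicate case is $k=1$, where instead $p^{2}\psi(C_{p^{a-1}})=\psi(C_{p^a})+(p-1)$, so that $\psi(G)\le\psi(C_n)+(p-1)\bigl(\psi(C_{m'})-\psi_T\bigr)$ with $m'$ the $p'$-part of $n$, and one needs $\psi_T\ge\psi(C_{m'})$. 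I expect this to be the main obstacle: since $T$ contains every $p'$-element, it amounts to showing that the $p'$-elements of $G$ carry at least the ``order mass'' of $C_{m'}$, which should be pushed through by a further induction or a shrewder choice of $N$ (or of the prime $p$) together with some subcases. For $G$ non-solvable --- a minimal normal subgroup being a power of a non-abelian simple group --- I would argue separately, using that such $G$ have comparatively few elements of large order relative to $|G|$, or cover $G$ by proper subgroups and induct. Finally, the equality clause falls out by tracing where strictness entered: forcing $\psi(G)=\psi(C_n)$ forces $G$ to be nilpotent with every Sylow subgroup cyclic, i.e.\ cyclic.
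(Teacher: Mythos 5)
The paper states this result as Theorem A and cites it from \cite{1} without proof, so there is no internal argument to compare against; judged on its own terms, your proposal is a reasonable plan with the hardest parts left open. The reductions you do carry out are sound: the $p$-group and nilpotent cases are correct, the coset identity $\psi(G)=p|N|\psi(G/N)-(p-1)\psi_T$ is valid, and the case $|N|=p^k$ with $k\ge 2$ does close via $p^{k+1}\psi(C_{p^{a-k}})\le\psi(C_{p^a})$ together with the inductive bound on $\psi(G/N)$.

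The two cases that carry the actual content of the theorem, however, are not proved. For $k=1$ you need $\psi_T\ge\psi(C_{m'})$, and this is genuinely false rather than merely unverified: for $G=S_3\times(C_2)^3$ with $N$ the normal subgroup of order $3$ one has $m'=16$ and $\psi(C_{16})=171$, while $T$ consists of the $32$ elements of $2$-power order and $\psi_T=63$. In that example the argument can be rescued by choosing instead a central minimal normal subgroup of order $2$, but you give no reason why a good choice of $N$ (or of $p$) always exists --- for instance when $G$ has a unique minimal normal subgroup of prime order --- and the ``further induction or shrewder choice'' you defer to is precisely where the theorem lives; when $\psi_T<\psi(C_{m'})$ one must instead quantify how far $\psi(G/N)$ falls below $\psi(C_{n/|N|})$, which is a new argument, not a detail. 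The non-solvable case is likewise only a hope: ``comparatively few elements of large order'' is not an argument, and controlling $\psi$ for groups such as $A_5$, ${\rm SL}(2,5)$ or ${\rm PSL}(2,7)$ in results of this type (compare Theorem C of the paper) requires real work. For reference, the original proof in \cite{1} avoids your $k=1$ obstruction by running the induction through a Sylow subgroup for the largest prime divisor of $|G|$ (showing it must be cyclic and normal at a maximum of $\psi$) rather than through a minimal normal subgroup.
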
In other words, the cyclic group $C_n$ is the unique group of order $n$ which attains the maximal value of $\psi(G)$ among groups of order $n$.\newpage

Since then many authors have studied the function $\psi(G)$ and its relations with the structure of $G$ (see e.g. \cite{2}-\cite{5}, \cite{7}-\cite{10}, \cite{12} and \cite{14}). In the papers \cite{4} and \cite{12} M. Amiri and S.M. Jafarian Amiri, and, independently, R. Shen, G. Chen and C. Wu started the investigation of groups with the second largest value of the sum of element orders. M. Herzog, P. Longobardi and M. Maj \cite{7} determined the exact upper bound for $\psi(G)$ for non-cyclic groups of order $n$:

\begin{thmy}
If $G$ is a non-cylic group of order $n$ and $q$ is the least prime divisor of the order of $n$, then
\begin{equation}
\psi(G)\leq\frac{\left[(q^2-1)q+1\right](q+1)}{q^5+1}\,\psi(C_n)=f(q)\psi(C_n).\nonumber
\end{equation}Moreover, the equality holds if and only if $n=q^2m$ with $(m,q!)=1$ and $G\cong(C_q\times C_q)\times C_m$.
\end{thmy}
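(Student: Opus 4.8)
The argument has two halves: realizing the bound, and proving it. For the first, I would record the standard facts that $\psi$ is multiplicative on direct products of coprime order and hence $\psi(C_n)=\prod_{p^{a}\|n}\psi(C_{p^{a}})$. A short computation gives $\psi(C_q\times C_q)=(q^2-1)q+1$ and $\psi(C_{q^2})=q^4-q^3+q^2-q+1=(q^5+1)/(q+1)$, so that
\[
\frac{\psi(C_q\times C_q)}{\psi(C_{q^2})}=\frac{\bigl[(q^2-1)q+1\bigr](q+1)}{q^5+1}=f(q).
\]
Thus if $n=q^2m$ with $q\nmid m$ and $G\cong(C_q\times C_q)\times C_m$, multiplicativity gives $\psi(G)/\psi(C_n)=\psi(C_q\times C_q)/\psi(C_{q^2})=f(q)$, so these groups meet the bound; here $(m,q!)=1$ is just a restatement of $q\nmid m$, since $q$ is already the least prime divisor of $n$. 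I would also check once and for all that $f$ is strictly decreasing, so $f(p)\le f(q)$ whenever $q\le p$.

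The core of the proof is the $p$-group case: \emph{if $P$ is a non-cyclic group of order $p^{a}$ with $a\ge 2$, then $\psi(P)\le f(p)\,\psi(C_{p^{a}})$, with equality iff $a=2$}. For $a=2$ this is the displayed identity, as $C_p\times C_p$ is the only non-cyclic group of that order. For $a\ge 3$ one must establish the strict inequality, either by induction on $a$ or by using the known description of the second-largest value of $\psi$ among groups of order $p^{a}$ (see \cite{4}, \cite{12}). I expect this to be the main obstacle: the bound is essentially sharp — already $\psi(Q_{8})=27$ while $f(2)\psi(C_{8})=301/11\approx 27.36$ — so no slack-based estimate can work and one has to control element orders, and the equality cases, very precisely.

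Granting this, the nilpotent case is immediate: for a non-cyclic nilpotent $G=P_{1}\times\cdots\times P_{k}$ with $P_{1}$ non-cyclic, Theorem~A gives $\psi(P_{j})\le\psi(C_{|P_{j}|})$ for every $j$ and the core estimate gives $\psi(P_{1})\le f(p_{1})\psi(C_{|P_{1}|})$; multiplying and using $f(p_{1})\le f(q)$ gives $\psi(G)\le f(q)\psi(C_n)$, while chasing equality forces each $P_{j}$ with $j\ge 2$ to be cyclic, $P_{1}\cong C_q\times C_q$, hence $G\cong(C_q\times C_q)\times C_m$. For non-nilpotent $G$ the task is to show the inequality is \emph{strict}. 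I would do this by induction on $|G|$, with two workhorses: the inequality $\psi(A\rtimes B)\le\psi(A)\psi(B)$ whenever $\gcd(|A|,|B|)=1$, which is moreover strict when the action is non-trivial, applied (via Schur--Zassenhaus) whenever $G$ has a proper non-trivial normal Hall subgroup; and direct estimates for the two families this misses — the $Z$-groups, where all Sylow subgroups are cyclic so $G$ is metacyclic and (being non-cyclic) non-abelian, handled by noting that an element outside the normal cyclic part has its relevant power confined to a small subgroup of that part; and the (almost-simple-type) groups with no proper non-trivial normal Hall subgroup, which sit far enough below $f(q)\psi(C_n)$ that a wasteful bound does the job.

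To summarize the difficulty: everything hinges on the sharp $p$-group inequality $\psi(P)\le f(p)\psi(C_{p^{a}})$, with $p=2$ the delicate case because the quaternion group $Q_{8}$ almost attains it. A secondary, bookkeeping issue is the non-nilpotent reduction: because $\psi$ is not monotone under quotients (already $\psi(C_{4})>\psi(C_{2})^{2}$) one cannot simply induct down a chief series, and it is the coprime semidirect-product inequality together with the case split above that carries the induction.
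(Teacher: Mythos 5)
First, a point of reference: this statement is Theorem~B of the paper, quoted from Herzog--Longobardi--Maj \cite{7} and not proved here, so there is no in-paper argument to compare yours against; your proposal has to stand on its own. The verification that $(C_q\times C_q)\times C_m$ attains the bound is correct, as is the reduction of the nilpotent case to the single inequality $\psi(P)\le f(p)\psi(C_{p^{a}})$ for non-cyclic $p$-groups $P$ of order $p^{a}$. But that inequality \emph{is} the theorem in the nilpotent case, and you explicitly leave it unproved for $a\ge 3$. As you yourself observe, $\psi(Q_8)=27$ against $f(2)\psi(C_8)=301/11$, so there is essentially no slack; establishing it requires a genuine structural analysis (e.g.\ reducing to $p$-groups containing a cyclic subgroup of small index and running through the classification of $2$-groups with a cyclic maximal subgroup, as in Lemma~2.2 of the present paper). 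Flagging this as ``the main obstacle'' is accurate, but it means the central claim is asserted rather than proved.

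Second, the non-nilpotent reduction has two concrete holes. (i) Your case split --- groups with a proper non-trivial normal Hall subgroup, $Z$-groups, and ``almost-simple-type'' groups --- is not exhaustive: $S_4$ has no proper non-trivial normal Hall subgroup (its normal subgroups $V_4$ and $A_4$ have orders $4$ and $12$, neither coprime to its index), is not a $Z$-group (its Sylow $2$-subgroup is $D_8$), and is far from almost simple in any sense that would make a ``wasteful bound'' routine; many solvable groups fall through this trichotomy. (ii) Even in the covered case $G=N\rtimes B$ with $N$ a normal Hall subgroup, the chain $\psi(G)\le\psi(N)\psi(B)\le\psi(C_{|N|})\psi(C_{|B|})=\psi(C_n)$ only reproves Theorem~A unless $N$ or $B$ is non-cyclic. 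When both are cyclic and the action is non-trivial you need the quantitative factor $f(q)$, not mere strictness of $\psi(N\rtimes B)<\psi(N)\psi(B)$; your remark that the relevant power is ``confined to a small subgroup'' gestures at the right mechanism (Lemma~1.2,~5) of the paper gives the exact formula $\psi(P\rtimes H)=|P|\psi(H)+(\psi(P)-|P|)\psi(C_H(P))$ for cyclic $p$-groups $P$), but no estimate is carried out. In short: the architecture is sensible, and the multiplicativity/equality-case bookkeeping is right, but both load-bearing steps --- the sharp $p$-group bound and the quantitative non-nilpotent bound --- are missing.
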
Note that the above function $f$ is strictly decreasing on $[2,\infty)$. Consequently, we have
\begin{equation}
\psi(G)\leq f(2)\psi(C_n)=\frac{7}{11}\,\psi(C_n),\nonumber
\end{equation}and the equality holds for $n=4m$ with $m$ odd and $G\cong(C_2\times C_2)\times C_m$.
\smallskip

By using the sum of element orders, several criteria for solvability of finite groups have been determined (see e.g. \cite{5,8}). We recall here the following theorem of M. Baniasad Asad and B. Khosravi \cite{5}:

\begin{thmy}
If $G$ is a group of order $n$ and $\psi(G)>\frac{211}{1617}\,\psi(C_n)$, then $G$ is solvable.
\end{thmy}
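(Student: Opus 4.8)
\emph{Recognizing the constant.} The first thing to notice is that $\dfrac{211}{1617}=\dfrac{\psi(A_5)}{\psi(C_{60})}$: indeed $\psi(A_5)=1+15\cdot 2+20\cdot 3+24\cdot 5=211$, while $\psi(C_{60})=\psi(C_4)\psi(C_3)\psi(C_5)=11\cdot 7\cdot 21=1617$, using the closed form $\psi(C_{p^a})=\frac{p^{2a+1}+1}{p+1}$. Since $\psi(A_5\times C_m)=211\,\psi(C_m)$ and $\psi(C_{60m})=1617\,\psi(C_m)$ whenever $\gcd(m,60)=1$, the natural sharp statement is: \emph{if $G$ is non-solvable of order $n$ then $\psi(G)\le\frac{211}{1617}\psi(C_n)$, with equality exactly when $G\cong A_5\times C_m$ and $\gcd(m,60)=1$.} I would prove the contrapositive, assuming $G$ non-solvable and bounding $\psi(G)$ from above, and it is convenient to work with a counterexample $G$ of least order.

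\emph{Toolkit and reduction to the simple case.} The elementary ingredients are: Theorem A; the submultiplicativity $\psi(H\times K)\le\psi(H)\psi(K)$, coming from $o(xy)=\operatorname{lcm}(o(x),o(y))\le o(x)o(y)$ for commuting $x,y$, with equality iff $\gcd(|H|,|K|)=1$; the monotonicities $\psi(H)\le\psi(G)$ for $H\le G$ and $|N|\,\psi(G/N)\le\psi(G)$ for $N\trianglelefteq G$; a coprime ``peeling'' bound $\psi(G)\le\psi(N)\psi(G/N)$ when $\gcd(|N|,|G/N|)=1$ (via Schur--Zassenhaus together with the submultiplicativity) and the coarser $\psi(G)\le\psi(N)\,\psi(C_{[G:N]})$; and two-sided estimates for $\psi(C_n)=\prod_{p^a\|n}\frac{p^{2a+1}+1}{p+1}\ge n\varphi(n)$. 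With these I would pare the minimal counterexample down: peeling off the solvable radical $R=\operatorname{Rad}(G)$ and any ``extra'' components of $\operatorname{soc}(G/R)$ costs only a sublinear growth in element orders against a quadratic growth of $|G|^2$ and of $\psi(C_n)$, so one is reduced to $G$ almost simple with \emph{simple} socle $S$, and then --- the outer automorphisms again contributing only sublinearly --- to $G=S$ a non-abelian simple group.

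\emph{The simple case.} Here the classification enters, but it can be funneled through Thompson's theorem on minimal simple groups: every non-solvable group has a section isomorphic to one of $\mathrm{PSL}_2(2^p)$, $\mathrm{PSL}_2(3^p)$ ($p$ an odd prime), $\mathrm{PSL}_2(p)$ ($p>3$ prime with $p\equiv\pm 2\pmod 5$), $\mathrm{Sz}(2^p)$ ($p$ an odd prime), or $\mathrm{PSL}_3(3)$. For each of these $S$ the multiset of element orders is explicit --- unipotent elements of order $p$ or $4$, semisimple elements whose orders divide $\frac{q-1}{d}$ or $\frac{q+1}{d}$, and the torus orders $q-1,\ q\pm\sqrt{2q}+1$ for $\mathrm{Sz}(q)$ --- so $\psi(S)$ is computable, and in any case $\psi(S)$ is at most $|S|$ times the largest element order, refined class-type by class-type. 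Comparing with $\psi(C_{|S|})\ge|S|\varphi(|S|)$ (sharpened by the closed form for $\psi(C_{p^a})$ when the crude bound is not enough) yields $\psi(S)/\psi(C_{|S|})<\frac{211}{1617}$ for every such $S$ except $S=\mathrm{PSL}_2(5)=A_5$, where equality holds. Substituting back through the reduction, and checking that only the direct factor $A_5$ can preserve equality, completes the proof.

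\textbf{Main obstacle.} Two steps carry the weight. First, the reduction: $\psi(G)\le\psi(N)\psi(G/N)$ genuinely fails without coprimeness (for instance $\psi(Q_8)=27>21=\psi(C_2)\psi(C_2\times C_2)$), so stripping off a nontrivial solvable radical, handling several socle components, and controlling non-split almost-simple extensions such as $\mathrm{SL}_2(5)$, $S_5$ or $\mathrm{PGL}_2(7)$ is not routine; one must make the ``quadratic beats sublinear'' heuristic precise via the coarse peeling inequality, and use that a non-solvable group has order divisible by $4$ and at least $60$ (so the $2$-part of $\psi(C_n)$ is already at least $\psi(C_4)=11$, providing slack). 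Second, the simple-group estimate must be \emph{uniform} in $q$: one needs upper bounds on $\psi(\mathrm{PSL}_2(q))$ and $\psi(\mathrm{Sz}(q))$ together with a lower bound on $\psi(C_{|S|})$ strong enough to beat $\frac{211}{1617}$ for \emph{all} $q$ in the families, not just the smallest members --- and because $|S|$ is highly composite, this forces the use of the exact value of $\psi(C_{p^a})$ rather than the crude inequality $\psi(C_n)\ge n\varphi(n)$.
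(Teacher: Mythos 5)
This statement is Theorem C, which the paper does not prove — it is quoted from Baniasad Asad and Khosravi \cite{5} — so your outline can only be judged on its own terms, and as it stands it has two load-bearing gaps. First, the reduction to the simple case does not go through with the tools you list. The ``coarser'' peeling inequality $\psi(G)\le\psi(N)\,\psi(C_{[G:N]})$ is false without a coprimality hypothesis: already for $G=C_4$ and $N=C_2$ one has $\psi(C_4)=11>9=\psi(C_2)\psi(C_2)$. Under coprimality it is fine (it follows from $\psi(G)\le\psi(N)\psi(G/N)$ together with Theorem A), but then it cannot strip the solvable radical off precisely the groups you must handle — ${\rm SL}_2(5)$, $S_5$, ${\rm PGL}_2(7)$, perfect central extensions generally — where the radical and its index share the prime $2$. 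The unconditional inequalities you have, $\psi(H)\le\psi(G)$ for $H\le G$ and $|N|\psi(G/N)\le\psi(G)\le|N|^2\psi(G/N)$, either bound $\psi(G)$ from below by data of a section or from above by a factor too lossy to beat $\frac{211}{1617}$; none converts ``the socle is small'' into ``$\psi(G)$ is small relative to $\psi(C_{|G|})$''. So ``quadratic beats sublinear'' remains a heuristic rather than an argument.

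Second, funnelling through Thompson's minimal simple groups is structurally the wrong move for an upper bound. Thompson hands you a section $H/N\cong S$ of $G$ with $S$ minimal simple, but $\psi(S)\le\frac{211}{1617}\psi(C_{|S|})$ says nothing about $\psi(G)$ versus $\psi(C_{|G|})$: the inequality $\psi(H)\le\psi(G)$ points the wrong way (a small subgroup value is a lower bound on $\psi(G)$), and a monotonicity of the normalized ratio $\psi(G)/\psi(C_{|G|})$ under passage to subgroups or sections is neither among your tools nor something that follows from them — it would itself be a nontrivial theorem. To run the contrapositive you must bound $\psi(G)$ for \emph{every} non-solvable $G$, and the passage from an arbitrary non-solvable group to its composition factors is exactly where the published proof spends its effort (elements of large order, Lucchini's theorem on cores of cyclic subgroups, and the Herzog--Longobardi--Maj machinery). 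Your identification of the extremal configuration $A_5\times C_m$ with $\gcd(m,30)=1$ and the computation $\frac{211}{1617}=\frac{\psi(A_5)}{\psi(C_{60})}$ are correct, and the overall shape (contrapositive, reduce to simple, classify) is the right one; but the two reductions that carry the proof are missing.
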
Note that the equality $\psi(G)=\frac{211}{1617}\,\psi(C_n)$ occurs for $n=60m$ with $(30,m)=1$ and $G\cong A_5\times C_m$.
\smallskip

We also recall a criterion for nilpotency of finite groups that has been proved in \cite{14}:

\begin{thmy}
If $G$ is a group of order $n$ and $\psi(G)>\frac{13}{21}\,\psi(C_n)$, then $G$ is nilpotent. Moreover, we have $\psi(G)=\frac{13}{21}\,\psi(C_n)$ if and only if $n=6m$ with $(6,m)=1$ and $G\cong S_3\times C_m$.
\end{thmy}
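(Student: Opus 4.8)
The plan is to prove the contrapositive: if $G$ has order $n$ and is \emph{not} nilpotent, then $\psi(G)\le\frac{13}{21}\psi(C_n)$, with equality precisely when $G\cong S_3\times C_m$ for some $m$ with $(6,m)=1$ (so $n=6m$). The ``if'' half of the equality statement is immediate from multiplicativity of $\psi$ and of $k\mapsto\psi(C_k)$ across coprime factors: $\psi(S_3\times C_m)=\psi(S_3)\,\psi(C_m)=13\,\psi(C_m)=\tfrac{13}{21}\,\psi(C_6)\,\psi(C_m)=\tfrac{13}{21}\,\psi(C_n)$, using $\psi(S_3)=13$ and $\psi(C_6)=21$.

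For the inequality I would use strong induction on $n$, beginning with the \emph{coprime-splitting reduction}. If $G=A\times B$ with $A,B\neq 1$ and $(|A|,|B|)=1$, then non-nilpotence of $G$ forces one factor, say $A$, to be non-nilpotent; by induction $\psi(A)\le\frac{13}{21}\psi(C_{|A|})$, by Theorem~A $\psi(B)\le\psi(C_{|B|})$, and multiplying gives $\psi(G)\le\frac{13}{21}\psi(C_n)$, with the equality case propagating to the stated shape. Since a nontrivial \emph{central} Sylow subgroup splits off as a coprime direct factor (Schur--Zassenhaus), this reduces us to the situation where $G$ is not a coprime direct product and $Z(G)$ contains no full Sylow subgroup of $G$.

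In that situation I would pass to a \emph{minimal non-nilpotent subgroup} $S\le G$ and apply Schmidt's structure theorem: $S=P\rtimes\langle y\rangle$ with $P=O_p(S)$ the normal Sylow $p$-subgroup, $\langle y\rangle$ cyclic of $q$-power order, $S'=P$, $Z(S)=\Phi(S)$, and $P/\Phi(P)$ a chief factor on which $y$ acts irreducibly; in particular $|S|=p^aq^b$. The key step is a \emph{monotonicity lemma}
\[
\frac{\psi(G)}{\psi(C_{|G|})}\ \le\ \frac{\psi(H)}{\psi(C_{|H|})}\qquad\text{whenever }H\le G,
\]
which, applied with $H=S$ and combined with the bound $\psi(S)/\psi(C_{|S|})\le\frac{13}{21}$ valid for \emph{every} Schmidt group (equality only for $S\cong S_3$), yields $\psi(G)\le\frac{13}{21}\psi(C_n)$. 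The Schmidt-group bound I would prove from the structure above: the $p$-elements of $S$ are exactly those of $P$; the Sylow $q$-subgroups are the $|P:C_P(y)|$ conjugates of $\langle y\rangle$, pairwise meeting only in the central subgroup $\langle y\rangle\cap Z(S)$; and $\psi(C_{|S|})=\psi(C_{p^a})\psi(C_{q^b})$. Assembling these into a rational function of $p$, $q$ and the invariants of $P$ and $\langle y\rangle$, one checks it is largest, equal to $\frac{13}{21}$, in the smallest case $p=3$, $q=2$, $|P|=3$, $|\langle y\rangle|=2$, i.e.\ $S=S_3$ --- the reduction to this case being that enlarging $P$ or $\langle y\rangle$, or increasing $p$ or $q$, strictly lowers the ratio. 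For the equality clause of the theorem, equality forces $S\cong S_3$ together with equality in the monotonicity lemma; unwinding the latter, together with the coprime-splitting reduction, leaves only $G\cong S_3\times C_m$ with $(6,m)=1$.

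The main obstacle is the monotonicity lemma: it is a genuine inequality about $\psi$ under passage to subgroups that does \emph{not} follow from Theorems~A--B --- those give only $\psi(G)\le f(2)\psi(C_n)=\frac{7}{11}\psi(C_n)$ for non-cyclic $G$, and $\frac{7}{11}>\frac{13}{21}$, so one really must squeeze out the extra factor. I expect it needs its own induction on $|G|$, using submultiplicativity $\psi(A\times B)\le\psi(A)\psi(B)$, the coset estimate $o(x)\le|N|\,o(xN)$ for $N\trianglelefteq G$, and a reduction to $[G:H]$ prime; should a clean subgroup-monotonicity statement prove too strong, the alternative is a minimal-counterexample argument in which one uses Theorem~D to dispose of the non-solvable case and a primitive-quotient analysis in the solvable case to show that a minimal non-nilpotent counterexample must be ``a Schmidt group times a cyclic group,'' contradicting the Schmidt-group bound. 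A secondary difficulty is making the Schmidt computation uniform over the infinitely many isomorphism types, which the stated reduction to $S=S_3$ is designed to absorb.
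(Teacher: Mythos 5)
First, note that this paper does not actually prove Theorem~D: it is quoted from \cite{14}, so there is no in-paper proof to compare yours against; I am judging your argument on its own terms. Its architecture (contrapositive, coprime splitting, passage to a minimal non-nilpotent subgroup, Schmidt's structure theorem, and the bound $\psi(S)\leq\frac{13}{21}\psi(C_{|S|})$ for Schmidt groups) is sensible, and the ``if'' half of the equality statement is correct as you give it. But the argument has a genuine gap at exactly the point you flag: the ``monotonicity lemma'' $\psi(G)/\psi(C_{|G|})\leq\psi(H)/\psi(C_{|H|})$ for every subgroup $H\leq G$ is asserted, not proved, and it carries the entire weight of the proof. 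It is not the known monotonicity in this circle of ideas --- inequality (1) of the present paper, $\psi''(G)\leq\psi''(G/N)$, is for \emph{quotients} and normalizes by $|G|^2$, not by $\psi(C_{|G|})$ --- and it is strictly stronger than the theorem you are proving: applied with $H$ a Schmidt subgroup it immediately yields the inequality, so deferring it defers all of the difficulty. The ingredients you propose for it (submultiplicativity of $\psi$ on direct products, coset estimates, reduction to prime index) do not visibly assemble into a proof, and its equality case --- which you need for the ``only if'' half --- is a further uncharacterized problem. Your fallback route is also flawed as written: you propose to ``use Theorem~D to dispose of the non-solvable case,'' but Theorem~D is the statement under proof; presumably you mean Theorem~C (since $\frac{211}{1617}<\frac{13}{21}$, any counterexample is solvable), and that substitution should be made explicit.

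Two secondary points. The equality clause of Theorem~D quantifies over \emph{all} groups of order $n$, so a contrapositive argument about non-nilpotent $G$ does not finish it: you must also rule out nilpotent groups attaining $\psi(G)=\frac{13}{21}\psi(C_n)$ exactly (for nilpotent $G$ the ratio factors as $\prod_p\psi(P_p)/\psi(C_{|P_p|})$ over Sylow subgroups, and one must show this product never equals $\frac{13}{21}$). And your element count in a Schmidt group $S=P\rtimes\langle y\rangle$ accounts only for $p$-elements and for the conjugates of $\langle y\rangle$, omitting elements of mixed order $p^iq^j$, which do occur (e.g.\ the elements of order $6$ in the Schmidt group $C_3\rtimes C_4$, where $\psi''$-type ratios come within a whisker of $\frac{13}{21}$); the verification that $S_3$ is the unique maximizer must include them. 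These are repairable, but the monotonicity lemma is not a detail: as it stands the proof is incomplete at its central step.
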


The largest four values of the ratio $\psi'(G)=\frac{\psi(G)}{\psi(C_{|G|})}$ and the groups $G$ for which they are attained can be obtained from Theorem D.

\bigskip\noindent{\bf Corollary E.} {\it Let $G$ be a finite group satisfying $\psi'(G)>\frac{13}{21}$\,. Then $\psi'(G)\in\{\frac{27}{43}\,, \frac{7}{11}\,, 1\}$, and one of the following holds:
\begin{itemize}
\item[{\rm a)}] $G\cong Q_8\times C_m$, where $m$ is odd;
\item[{\rm b)}] $G\cong (C_2\times C_2)\times C_m$, where $m$ is odd;
\item[{\rm c)}] $G$ is cyclic.
\end{itemize}}

The above results show that a finite group $G$ becomes cyclic, abelian, nilpotent or solvable if $\psi'(G)$ is sufficiently large\footnote{A similar result for supersolvability has been conjectured in \cite{14}.}. In what follows, we consider the function
\begin{equation}
\psi''(G)=\frac{\psi(G)}{|G|^2}\,.\nonumber
\end{equation}Clearly, $\psi''(G)<1$ if $G$ is non-trivial, and there are sequences of groups $(G_n)$ such that $\psi''(G_n)$ tends to $1$ when $n$ tends to infinity (for example, $(C_p)$ where $p$ runs over the set of primes). We also observe that $\psi''$ satisfies the following important property
\begin{equation}
\psi''(G)\leq\psi''(G/H),\, \forall\, H\lhd G,
\end{equation}by Proposition 2.6 of \cite{8}. We will use this new function to give criteria for a finite group to be cyclic, abelian, nilpotent, supersolvable and solvable, respectively. Our main result is the following theorem.

\begin{theorem}
Let $G$ be a finite group. Then the following hold:
\begin{itemize}
\item[{\rm a)}] If $\psi''(G)>\frac{7}{16}=\psi''(C_2\times C_2)$, then $G$ is cyclic;
\item[{\rm b)}] If $\psi''(G)>\frac{27}{64}=\psi''(Q_8)$, then $G$ is abelian;
\item[{\rm c)}] If $\psi''(G)>\frac{13}{36}=\psi''(S_3)$, then $G$ is nilpotent;
\item[{\rm d)}] If $\psi''(G)>\frac{31}{144}=\psi''(A_4)$, then $G$ is supersolvable;
\item[{\rm e)}] If $\psi''(G)>\frac{211}{3600}=\psi''(A_5)$, then $G$ is solvable.
\end{itemize}
\end{theorem}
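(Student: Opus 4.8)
The plan is to prove the contrapositives by induction on $|G|$, with property $(1)$ as the engine. Fix one of the five properties $P$ and write $t_P$ for the corresponding threshold in the statement. If $G$ fails $P$ and some proper quotient $G/N$ ($N\neq 1$) also fails $P$, then by induction and $(1)$ we get $\psi''(G)\le\psi''(G/N)\le t_P$; so we may assume $G$ is \emph{just non-}$P$, meaning $G$ fails $P$ but every proper quotient of $G$ has $P$. Since each of cyclic, abelian, nilpotent, supersolvable, solvable is closed under subgroups and quotients, and (for all but ``cyclic'') a direct product of two $P$-groups is a $P$-group, a just non-$P$ group $G$ has a \emph{unique} minimal normal subgroup $M$: two distinct ones $M_1,M_2$ would give $M_1\cap M_2=1$ and an embedding $G\hookrightarrow G/M_1\times G/M_2$ into a $P$-group. (For $P=$ cyclic the same argument shows instead that $G$ is either abelian, hence $\cong C_p\times C_p$, or has a unique minimal normal subgroup.) It remains to bound $\psi''$ on the resulting short list of families; I will use throughout that $\psi''(H\times C_m)=\psi''(H)\psi''(C_m)\le\psi''(H)$ and that $\psi''(C_m)=\psi(C_m)/m^2=\prod_{p^a\| m}\tfrac{p^{2a+1}+1}{(p+1)p^{2a}}$ is non-increasing along divisibility of $m$.

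For (a), either $G\cong C_p\times C_p$, where $\psi''(G)=\frac{p^{3}-p+1}{p^{4}}\le\frac{7}{16}$ with equality only at $p=2$; or $G$ is non-abelian with $G/G'$ cyclic and unique minimal normal $M$. If $M$ is non-abelian then $G$ is non-solvable and Theorem C already gives $\psi''(G)\le\frac{211}{1617}\psi''(C_n)<\frac{211}{1617}<\frac{7}{16}$. If $M$ is elementary abelian then $G$ is solvable and, having cyclic commutator quotient, non-nilpotent, so Theorem B applies: $\psi''(G)\le f(q)\psi''(C_n)$ with $q$ the least prime divisor of $n=|G|$. A short computation with the product formula for $\psi''(C_n)$ shows $f(q)\psi''(C_n)<\frac{7}{16}$ except when $n=2p$, and there one instead computes $\psi''(D_p)=\frac{p^{2}+p+1}{4p^{2}}\le\frac{7}{16}$, again with equality only at $p=2$.

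Parts (b), (c), (e) run along the same lines with $t_P$ supplied by the relevant known theorem. For (b): a just non-abelian group has $G'$ as its unique minimal normal subgroup, and is either a minimal non-abelian (Miller--Moreno) group --- these are classified, and among them $\psi''$ is maximal at $\psi''(Q_8)=\frac{27}{64}$ --- or has non-abelian $G'$, whence $G$ is non-solvable and (e) applies. For (c): one uses Theorem D; a just non-nilpotent solvable group is, by the structure theory of Schmidt groups, of the form $C_r^{\,d}\rtimes C_s$ with $C_s$ acting irreducibly, and a parametrized computation gives $\psi''\le\frac{13}{36}$ with equality only at $S_3$, while non-nilpotent non-solvable groups fall under (e). For (e): a just non-solvable group is almost simple ($S\le G\le\mathrm{Aut}(S)$, $S$ non-abelian simple), so $|S|\mid|G|$ and $\psi''(G)\le\frac{211}{1617}\psi''(C_{|G|})\le\frac{211}{1617}\psi''(C_{|S|})$; this is $\le\frac{211}{3600}$ once $\psi''(C_{|S|})\le\frac{1617}{3600}$, which holds for $|S|=60$ (equality, at $A_5$) and all larger simple orders except a short list (notably $168$); for those finitely many exceptional socles --- and for the infinitely many large simple groups, reduced to finitely many by the crude bound $\psi''(G)\le\mathrm{exp}(G)/|G|\to 0$ --- one computes directly, e.g.\ $\psi''(\mathrm{PSL}_2(7))=\frac{715}{168^{2}}$, all below $\frac{211}{3600}$. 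Part (d) is the exception: lacking a ratio theorem, one invokes the classification of minimal non-supersolvable groups, checks $\psi''\le\frac{31}{144}$ on them (the infinite families $C_r^{\,k}\rtimes H$ by parametrized computation, plus finitely many small cases), and sends the non-solvable ones to (e); $A_4$ attains the bound.

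The main obstacle is precisely that Theorems B, C, D are \emph{not} sharp enough to close the contrapositives by themselves: $\psi''(C_n)$ tends to $1$ along primes and stays large whenever $n$ has few, large prime factors, so the naive inequality $\psi''(G)\le(\text{const})\cdot\psi''(C_n)$ overshoots the thresholds. The real content is the structural reduction to just non-$P$ groups together with a complete, honest case analysis: handling the infinite families ($D_p$, $C_q\rtimes C_p$, the Schmidt groups, the minimal non-abelian and minimal non-supersolvable families) by uniform parametrized computations; isolating the finitely many small exceptions whose order has too few prime divisors for the ratio bounds to bite ($A_4$, $S_4$, $\mathrm{PSL}_2(7)$, $\mathrm{PGL}_2(7)$, the small Schmidt and Miller--Moreno groups, \dots); marshalling, for (d), the classification of minimal non-supersolvable groups in the absence of a ratio theorem; and confirming in every case that $\psi''$ does not exceed the displayed value, with equality precisely at the claimed group.
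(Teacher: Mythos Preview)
Your reduction via $(1)$ to \emph{just non-$P$} groups (every proper quotient has $P$) is sound in outline, but the execution conflates this quotient condition with \emph{minimal non-$P$} (every proper \emph{subgroup} has $P$), and the classifications you invoke---Miller--Moreno, Schmidt, Doerk---classify the latter. These notions differ: $C_5\rtimes C_4$ (faithful action) is just non-abelian but not Miller--Moreno, since it contains $D_5$; $C_7\rtimes C_6$ (faithful) is just non-nilpotent but not a Schmidt group, since it contains $D_7$; and $A_5\wr C_2=(A_5\times A_5)\rtimes C_2$ is just non-solvable but not almost simple, its socle being $A_5\times A_5$. In (a), the claim that Theorem~B closes the non-abelian solvable case ``except when $n=2p$'' is likewise wrong: $C_q\rtimes C_{2p}$ with $2p\mid q-1$ and faithful action is just non-cyclic of order $n=2pq$, and $f(2)\psi''(C_{2pq})\to 21/44>7/16$ as $p,q\to\infty$, so Theorem~B alone does not suffice. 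Each of these families does in fact satisfy $\psi''\le t_P$, but establishing that requires a direct analysis of just non-$P$ groups, which is a different (and not obviously finite) task from citing the minimal non-$P$ classifications.

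The paper avoids all of this by instead using the elementary fact that $\psi''(G)>c$ forces an $x\in G$ with $[G:\langle x\rangle]<1/c$. For (c), index at most $2$ yields a cyclic normal $2$-complement and a Sylow $2$-subgroup with a cyclic maximal subgroup, and the argument reduces to explicit computations on such $2$-groups and on $C_{p^n}\rtimes C_2$ (Lemmas~2.2 and~2.3); (a) and (b) then follow from (c). For (d) and (e), the index bound ($\le 4$, resp.\ $\le 17$) is combined with ${\rm Core}_G(\langle x\rangle)$, Lucchini's theorem, the Huppert--Ito theorem, and a finite check of the non-solvable groups of order $<225$. No minimal non-$P$ classification is needed.
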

\smallskip

Note that the converses of the implications in Theorem 1.1 are not true. We observe that $\psi''(C_{p^m})$ tends to $\frac{p}{p+1}$ when $m$ tends to infinity. Let $(p_i)_{i\in\mathbb{N}^*}$ be the sequence of primes. Since $\prod_{i\geq 1}\frac{p_i}{p_i+1}=0$, there exists a positive integer $k$ such that $c=\prod_{i=1}^k \frac{p_i}{p_i+1}<\frac{211}{3600}\,$. If $n=\prod_{i=1}^k p_i^{n_i}$, then $\psi''(C_n)$ tends to $c$ when $n_1$, ..., $n_k$ tend to infinity. In other words, $\psi''(C_n)<\frac{211}{3600}$ for $n_1$, ..., $n_k$ sufficiently large, i.e. there are cyclic groups $G$ with $\psi''(G)<\frac{211}{3600}\,\vspace{2mm}$.\newpage

For the proof of the above theorem, we need some preliminary results about the function $\psi$ taken from \cite{2,7}.

\begin{lemma}
Here $G$ denotes a finite group, $p$, $p_i$ denote primes and $n$, $n_i$, denote positive integers. The following statements hold:
\begin{itemize}
\item[{\rm 1)}] {\rm (\cite{7}, Lemma 2.9(1))} $\psi(C_{p^n})=\frac{p^{2n+1}+1}{p+1}$\,;
\item[{\rm 2)}] {\rm (\cite{7}, Lemma 2.2(3))} $\psi$ is multiplicative, that is if $G=A\times B$, where $A,B$ are subgroups of $G$ satisfying $\gcd(|A|,|B|)=1$, then $\psi(G)=\psi(A)\psi(B)$;
\item[{\rm 3)}] {\rm (\cite{2}, Lemma 2.1)} $\psi(A\times B)\leq \psi(A)\psi(B)$. Moreover, $\psi(A\times B)=\psi(A)\psi(B)$ if and only if $\gcd(|A|,|B|)=1$;
\item[{\rm 4)}] {\rm (\cite{7}, Lemma 2.9(2))} If $n=\prod_{i=1}^k p_i^{n_i}$, where $p_i\neq p_j$ for $i\neq j$, then $\psi(C_n)=\prod_{i=1}^k \psi(C_{p_i^{n_i}})$;
\item[{\rm 5)}] {\rm (\cite{7}, Lemma 2.2(5))} If $G=P\rtimes H$, where $P$ is a cyclic $p$-group, $|H|>1$ and $(p,|H|)=1$, then $\psi(G)=|P|\psi(H)+(\psi(P)-|P|)\psi(C_H(P))$.
\end{itemize}
\end{lemma}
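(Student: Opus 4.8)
The plan is to handle parts 1)--4) by direct computation and multiplicativity, and to reserve the real work for part 5).

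For 1), I would count elements of $C_{p^n}$ by order: it has a unique element of order $1$ and $\varphi(p^k)=p^k-p^{k-1}$ elements of order $p^k$ for $1\le k\le n$, so $\psi(C_{p^n})=1+\sum_{k=1}^{n}(p^k-p^{k-1})p^k=1+\sum_{k=1}^{n}(p^{2k}-p^{2k-1})$, and summing the two geometric series gives $1+\frac{p(p^{2n}-1)}{p+1}=\frac{p^{2n+1}+1}{p+1}$. For 2), if $G=A\times B$ with $\gcd(|A|,|B|)=1$ then $o(a)$ and $o(b)$ are coprime for all $a\in A$, $b\in B$, hence $o((a,b))=\operatorname{lcm}(o(a),o(b))=o(a)o(b)$ and $\psi(G)=\sum_{a,b}o(a)o(b)=\psi(A)\psi(B)$. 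For 3), the general estimate $o((a,b))=\operatorname{lcm}(o(a),o(b))\le o(a)o(b)$, summed over all pairs, yields $\psi(A\times B)\le\psi(A)\psi(B)$, with equality exactly when $\gcd(o(a),o(b))=1$ for every $a\in A$, $b\in B$; by Cauchy's theorem this happens iff no prime divides both $|A|$ and $|B|$, i.e. iff $\gcd(|A|,|B|)=1$. Then 4) follows by iterating 2), since $C_n\cong\prod_{i=1}^{k}C_{p_i^{n_i}}$ with pairwise coprime factor orders.

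For 5), write $G=P\rtimes H$ with $P=\langle a\rangle$ cyclic of order $p^n$; every element of $G$ has a unique expression $xh$ with $x\in P$ and $h\in H$. Since $P$ is normal and abelian, conjugation by $xh$ on $P$ coincides with conjugation by $h$; writing ${}^{g}y=gyg^{-1}$ and $m=o(h)$, expanding gives $(xh)^{m}=x\cdot{}^{h}x\cdot{}^{h^{2}}x\cdots{}^{h^{m-1}}x=:N_h(x)\in P$. Because $m\mid o(xh)$ (project onto $G/P\cong H$) and $(xh)^{mt}=N_h(x)^{t}$, one gets $o(xh)=m\cdot o(N_h(x))$. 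Using that $P$ is abelian, $N_h$ is an endomorphism of $P$. If $h\in C_H(P)$ then ${}^{h^{i}}x=x$ for all $i$, so $N_h(x)=x^{m}$, and since $\gcd(m,p)=1$ and $o(x)\mid p^{n}$ we get $o(N_h(x))=o(x)$, hence $o(xh)=o(h)\,o(x)$; summing over these $h$ contributes $\sum_{h\in C_H(P)}o(h)\sum_{x\in P}o(x)=\psi(C_H(P))\,\psi(P)$.

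The crux is the case $h\notin C_H(P)$. Identifying $P$ with $\mathbb{Z}/p^{n}\mathbb{Z}$ written additively, $h$ acts as multiplication by a unit $u\in(\mathbb{Z}/p^{n}\mathbb{Z})^{*}$ with $u^{m}=1$ and $u\neq 1$, so $N_h(x)=(1+u+\cdots+u^{m-1})x$. The point I would single out as the main obstacle is showing $u-1$ is invertible: the order of $u$ divides $m$, which divides $|H|$ and so is prime to $p$, whereas the kernel of $(\mathbb{Z}/p^{n}\mathbb{Z})^{*}\to(\mathbb{Z}/p\mathbb{Z})^{*}$ has order $p^{n-1}$; hence $u\equiv 1\pmod p$ would force $u=1$, so $u-1$ is a unit. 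Then $(u-1)(1+u+\cdots+u^{m-1})=u^{m}-1=0$ gives $1+u+\cdots+u^{m-1}=0$, i.e. $N_h(x)=0$ for all $x$, so $o(xh)=o(h)$ for every $x\in P$. These $h$ therefore contribute $\sum_{h\notin C_H(P)}|P|\,o(h)=|P|\big(\psi(H)-\psi(C_H(P))\big)$, and adding the two contributions gives $\psi(G)=\psi(C_H(P))\psi(P)+|P|\big(\psi(H)-\psi(C_H(P))\big)=|P|\psi(H)+(\psi(P)-|P|)\psi(C_H(P))$. When $p=2$, $(\mathbb{Z}/2^{n}\mathbb{Z})^{*}$ is a $2$-group, so this second case is vacuous and the identity collapses to $\psi(G)=\psi(P)\psi(H)$, consistent with 2)--3).
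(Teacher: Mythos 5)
Your proposal is correct. Note that the paper itself does not prove Lemma 1.2 at all --- each item is quoted with a citation to \cite{2} or \cite{7} --- so there is no internal argument to compare against; what you have supplied is a complete self-contained proof where the paper relies on the literature. Parts 1)--4) are routine and your computations check out (in 3), the reduction of termwise equality $\mathrm{lcm}(o(a),o(b))=o(a)o(b)$ to $\gcd(|A|,|B|)=1$ via Cauchy's theorem is exactly the right point to make explicit). The substance is in 5), and your argument there is sound: the identity $(xh)^{m}=N_h(x)$ with $m=o(h)$, together with $m\mid o(xh)$ from projecting to $G/P$, gives $o(xh)=o(h)\,o(N_h(x))$; the case $h\in C_H(P)$ yields $N_h(x)=x^{m}$ with $o(x^m)=o(x)$ since $\gcd(m,p)=1$; and for $h\notin C_H(P)$ the key observation that $u-1$ is a unit in $\mathbb{Z}/p^{n}\mathbb{Z}$ (because the kernel of reduction mod $p$ is a $p$-group while $o(u)$ is prime to $p$) forces $1+u+\cdots+u^{m-1}=0$ and hence $o(xh)=o(h)$. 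Summing the two contributions gives exactly $|P|\psi(H)+(\psi(P)-|P|)\psi(C_H(P))$. Your closing remark that for $p=2$ the action is necessarily trivial and the formula degenerates to $\psi(P)\psi(H)$ is a nice consistency check with 2). This is essentially the argument of Herzog--Longobardi--Maj in \cite{7}, so nothing is gained or lost relative to the cited source, but it makes the paper's toolbox self-contained.
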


We also need the following two theorems. The first one is due to A. Lucchini (see Theorem 2.20 in \cite{6}), while the second one is a consequence of a theorem of B. Huppert and N. Ito (see Theorem 13.10.1 in \cite{11}).

\begin{theorem}
Let $A$ be a cyclic proper subgroup of a finite group $G$, and let $K={\rm Core}_G(A)$. Then
$[A:K]<[G:A]$, and in particular, if $|A|\geq[G:A]$, then $K>1$.
\end{theorem}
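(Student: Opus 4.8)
\emph{Proof strategy.} The plan is to reduce first to the ``in particular'' clause. Passing from $G$ to $G/K$ preserves both cyclicity and properness of the image of $A$, and $\mathrm{Core}_{G/K}(A/K)=1$; hence, once one knows that a cyclic proper subgroup with trivial core has index strictly larger than its own order, applying this to $A/K\le G/K$ gives $[A:K]=|A/K|<[G/K:A/K]=[G:A]$. So from now on assume $\mathrm{Core}_G(A)=1$ and, aiming at a contradiction, suppose $|A|\ge[G:A]$, i.e.\ $|A|^2\ge|G|$.

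The workhorse inequality will be $|AA^g|=|A|^2/|A\cap A^g|\le|G|$ for every $g\in G$, which yields $|A\cap A^g|\ge|A|^2/|G|\ge1$; in particular, if $|A|^2>|G|$ then $|A\cap A^g|>1$ for all $g$. I would first settle the case $|A|=p^a$: each $A\cap A^g$ is then a nontrivial subgroup of the cyclic $p$-group $A$ and so contains the unique subgroup $P\le A$ of order $p$; thus $P\le\bigcap_g A^g=\mathrm{Core}_G(A)=1$, a contradiction. The borderline $|A|^2=|G|$ is handled separately, by observing that $G$ cannot be written as $AA^g$ with $A\cap A^g=1$ and $A,A^g$ conjugate cyclic $p$-subgroups.

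For a general cyclic $A$, I would switch to the faithful transitive action of $G$ on the set $\Omega$ of $n:=[G:A]$ cosets of $A$ --- faithful precisely because $\mathrm{Core}_G(A)=1$. Here $A=G_{\omega_0}$ is a cyclic point stabiliser acting faithfully on the remaining $n-1$ points, so its orbit lengths $s_1,\dots,s_t$ on those points divide $|A|$, sum to $n-1$, and satisfy $\mathrm{lcm}(s_1,\dots,s_t)=|A|$ (by cyclicity and faithfulness). What remains is to prove $|A|\le n-1=\sum_i s_i$. This is not a formal consequence of the numerical data ($\mathrm{lcm}(2,3)>2+3$), so one must use that the $s_i$ are lengths of $A$-orbits inside a set on which $G$ is transitive. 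The two natural finishes are: (i) reduce to the case $A$ maximal in $G$, so the action on $\Omega$ is primitive, and invoke the fact that a primitive permutation group with a cyclic point stabiliser has that stabiliser of order less than the degree; or (ii) induct on $|A|$, passing to the index-$p$ subgroup $A_1<A$ for a prime $p\mid|A|$ and controlling the relation between $\mathrm{Core}_G(A_1)$ and $\mathrm{Core}_G(A)$. I expect this last inequality $|A|\le n-1$ --- equivalently, the exclusion of the ``nearly coprime orbit-length'' configurations --- to be the main obstacle; the reductions and the prime-power case are routine by comparison.
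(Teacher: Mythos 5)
Your reduction to the core-free case and your disposal of the prime-power case are both correct: the passage to $G/K$ behaves as you say, and the estimate $|A\cap A^g|\ge |A|^2/|AA^g|$ combined with the classical fact that a finite group is never the product of two conjugate proper subgroups forces $A\cap A^g>1$ for every $g$, which settles the case where $A$ is a cyclic $p$-group. But the theorem is not proved: the inequality $|A|\le[G:A]-1$ for a core-free cyclic $A$ of composite order, which you explicitly leave open, is not a loose end --- it is the entire content of Lucchini's theorem. Your own example ($\mathrm{lcm}(2,3)>2+3$) correctly shows that the orbit-length bookkeeping cannot close the argument, and neither of your proposed finishes survives scrutiny. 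Finish (i) is circular: ``a primitive permutation group with a cyclic point stabiliser has stabiliser of order less than the degree'' is exactly the statement to be proved when $A$ is maximal and core-free, and the reduction to that case is itself unavailable, since a maximal subgroup of $G$ containing $A$ need not be cyclic, so neither the theorem nor the quoted ``fact'' applies to the overgroup. Finish (ii) degrades numerically: from $A_1<A$ of index $p$ one gets $\mathrm{Core}_G(A_1)\le\mathrm{Core}_G(A)=1$, so induction on $|A|$ yields only $|A|/p<[G:A_1]=p\,[G:A]$, i.e.\ $|A|<p^2[G:A]$, far weaker than what is needed.

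For context, the paper does not prove this statement either; it quotes it as Theorem 2.20 of Isaacs's \emph{Finite Group Theory} (Lucchini's theorem), and the proof there is substantially harder than anything in your sketch. It is an induction on $|G|$ which, after the same reduction to $K=1$ and essentially the same product estimate you use, introduces a minimal normal subgroup $E$ of $G$, splits according to whether $EA=G$ or $EA<G$, applies the inductive hypothesis to $A\le EA$ in the latter case, and in the former needs a bound of Horo\v sevski\u\i\ type: a cyclic group acting faithfully by automorphisms on a nontrivial finite group $E$ has order strictly less than $|E|$. That machinery --- not the reductions or the prime-power case --- is where the theorem actually lives, and it is absent from your proposal.
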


\begin{theorem}
Suppose that a finite group $G$ contains a subgroup $B$ of prime power index and $B$ contains a cyclic subgroup $H$ of index $[B:H]\leq 2$. Then $G$ is solvable.
\end{theorem}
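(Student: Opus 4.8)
The plan is to derive the statement from the Huppert--Itô theorem [11, Theorem 13.10.1], which I shall use in the form: \emph{a finite group possessing a cyclic subgroup of prime-power index is solvable}. Our hypothesis is slightly weaker, since the subgroup $B$ of prime-power index is only required to \emph{contain} a cyclic subgroup $H$ with $[B:H]\le 2$, so the work lies in bridging that gap. If $B$ is itself cyclic there is nothing to do, so assume $[B:H]=2$ with $H$ cyclic and write $[G:B]=p^{a}$; then $H$ is a cyclic subgroup of $G$ of index $[G:H]=2p^{a}$.

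I would then argue by minimal counterexample. Let $G$ be of least order among the groups violating the assertion. If $1<N\lhd G$ is a proper normal subgroup, then $N\cap B$ has index a power of $p$ in $N$ and contains the cyclic subgroup $H\cap N$ of index at most $2$, while $BN/N\cong B/(B\cap N)$ has index a power of $p$ in $G/N$ and contains the cyclic subgroup $H(B\cap N)/(B\cap N)$ of index at most $2$; so both $N$ and $G/N$ inherit the hypotheses and, having smaller order, are solvable, forcing $G$ to be solvable --- a contradiction. Hence $G$ is a non-abelian simple group. If $p=2$, then $[G:H]=2^{a+1}$ is a prime power, so [11, Theorem 13.10.1] makes $G$ solvable, a contradiction; thus $p$ is odd. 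If $H=1$, then $|G|=2p^{a}$ is solvable by Burnside's $p^{a}q^{b}$-theorem, again a contradiction; thus $1<H<G$. Since $G$ is simple, ${\rm Core}_{G}(H)=1$, so Theorem 1.3 applied to the cyclic proper subgroup $H$ yields $|H|<[G:H]$, i.e.\ $|H|^{2}<|G|$. Therefore $|B|=2|H|<2\sqrt{|G|}$, whence $p^{a}=|G|/|B|>\tfrac12\sqrt{|G|}$ and $|G|<4p^{2a}$: in short, $G$ is a non-abelian simple group possessing a subgroup $B$ of prime-power index $p^{a}$, with $|B|<4p^{a}$, that contains a cyclic subgroup of index at most $2$.

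The final step --- ruling out such a small, almost-cyclic subgroup of prime-power index in a non-abelian simple group --- is the crux, and I expect it to be the main obstacle: everything above is routine bookkeeping, but here one genuinely needs structural information about finite simple groups. It can be supplied by the classification of the finite simple groups that possess a subgroup of prime-power index (checking case by case that the point stabilizer is far too large and complicated to contain a cyclic subgroup of index $\le 2$), or, alternatively, this hard core is precisely what the Huppert--Itô theorem [11, Theorem 13.10.1] provides; the elementary reductions above serve only to isolate this single hard case.
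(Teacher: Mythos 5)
The paper contains no proof of this statement: it appears there as Theorem 1.4, a quoted background result justified solely by the remark that it ``is a consequence of a theorem of B. Huppert and N. Ito (see Theorem 13.10.1 in [11])''. Your attempt therefore cannot be compared with an argument in the paper, only with that citation, and as a self-contained proof it has a genuine gap. The reductions you perform are correct as far as they go: both $N$ and $G/N$ inherit the hypotheses, so a minimal counterexample is non-abelian simple; the case $p=2$ is settled by the cyclic-subgroup-of-prime-power-index form of Huppert--Ito, since then $[G:H]=2^{a+1}$; the case $H=1$ is settled by Burnside's $p^aq^b$-theorem; and Lucchini's theorem (Theorem 1.3) gives $|H|<[G:H]$ in what remains. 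But the remaining configuration --- $G$ non-abelian simple, $p$ odd, $[B:H]=2$ --- is exactly where the entire content of the theorem lives, and you leave it unresolved. The version of Huppert--Ito you fixed at the outset does not apply there, because $[G:H]=2p^{a}$ is not a prime power; and the two exits you offer are either a classification-based case analysis that you do not carry out, or an appeal to a stronger form of Huppert--Ito that is essentially the statement being proved, which is circular. In short, what you have is a correct reduction to the hard case together with an admission that the hard case remains open; that is not a proof. Since the author himself treats the result as citable, the appropriate course here is simply to invoke Theorem 13.10.1 of Scott's book, as the paper does, rather than to reprove a fragment of it.
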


We end our paper by indicating a natural open problem concerning the criteria in Theorem 1.1.

\bigskip\noindent{\bf Open problem.} Determine all finite groups $G$ for which $\psi''(G)$ takes the values $\frac{7}{16}\,$, $\frac{27}{64}\,$, $\frac{13}{36}\,$, $\frac{31}{144}$ and $\frac{211}{3600}\,$, respectively.
\bigskip

Note that, given $c\in(0,1)\cap\mathbb{Q}$, the main difficulty in solving this problem is to determine positive integers $n$ such that $\psi''(C_n)=c$.

\section{Proofs of the main results}

First of all, we give three lemmas that will be useful to us.

\begin{lemma}
Let $G$ be a finite group. If $\psi''(G)\geq\frac{1}{3}$, then either $G$ is cyclic or there exists $x\in G$ such that $[G:\langle x\rangle]=2$.
\end{lemma}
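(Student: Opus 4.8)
The plan is to argue contrapositively on the structure of $G$: assuming $G$ is not cyclic and contains no subgroup of index $2$, I would show $\psi''(G)<\tfrac13$. The natural strategy is to exhibit a cyclic subgroup $A$ of $G$ that is ``large'' relative to its index, apply Lucchini's theorem (Theorem~1.4) to produce a nontrivial core $K=\mathrm{Core}_G(A)$, and then push the estimate down to the quotient $G/K$ via the monotonicity inequality $(1)$, namely $\psi''(G)\le\psi''(G/K)$. More precisely, I would take $A=\langle x\rangle$ with $x$ an element of maximal order in $G$, so that $|A|=o(x)=\exp$ of a suitable section; the bound $\psi(G)=\sum_{g\in G}o(g)\le |G|\cdot o(x)$ gives immediately $\psi''(G)\le o(x)/|G| = 1/[G:A]$. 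Hence if $[G:A]\ge 3$ we are done, so the only remaining cases are $[G:A]\in\{1,2\}$: the first says $G$ is cyclic, and the second is exactly the conclusion we want (after checking that $[G:\langle x\rangle]=2$ with $x$ of maximal order does occur — or rather, that $[G:A]=2$ forces the existence of an index-$2$ cyclic subgroup, which is immediate).

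Wait — this makes the lemma nearly trivial, so the real content must be ruling out the degenerate possibility that no single element generates a subgroup of index $\le 2$ while still $\psi''(G)\ge\tfrac13$. The crude bound $\psi(G)\le|G|\,\max_x o(x)$ is too lossy in general, so instead I would let $m=\max_{x\in G}o(x)$ and split the sum: elements of order $m$ contribute at most $m\cdot|\{x:o(x)=m\}|$, while \emph{every} other element $g$ satisfies $o(g)\le m/p$ for some prime $p\mid m$, hence $o(g)\le m/2$. This yields $\psi(G)\le m\cdot t + (|G|-t)\cdot m/2$ where $t=|\{x:o(x)=m\}|$, so $\psi''(G)\le \tfrac{m}{2|G|}\bigl(1 + t/|G|\bigr)$. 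If $\psi''(G)\ge\tfrac13$ then $\tfrac{m}{|G|}\bigl(1+t/|G|\bigr)\ge \tfrac23$, and since $t\le|G|$ we still only get $m/|G|\ge 1/3$, i.e. $[G:\langle x\rangle]\le 3$ for $x$ of order $m$. To kill the case $[G:\langle x\rangle]=3$ I would sharpen further: when $[G:A]=3$ with $A=\langle x\rangle$ cyclic, Lucchini gives $[A:K]<3$ so $[A:K]\le 2$, and then I would compute $\psi''$ of the quotient $G/K$, which has order $[G:A]\cdot[A:K]\le 6$ and contains a cyclic subgroup of index $3$; a direct check over the (very few) such groups — the relevant ones are quotients of order $3$ or $6$ — shows $\psi''(G/K)<\tfrac13$ unless $G/K$ is cyclic of order $3$, in which case $A=K$ is normal and $G$ itself would have an even smaller index cyclic normal subgroup, looping back into the $[G:A]\le 2$ case.

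The key technical ingredients, then, are: (i) the elementary inequality $o(g)\le m/2$ for non-maximal-order elements, giving the first reduction $[G:\langle x\rangle]\le 3$; (ii) Lucchini's theorem to control the core; and (iii) the monotonicity $\psi''(G)\le\psi''(G/H)$ to replace $G$ by a small quotient in the leftover case $[G:\langle x\rangle]=3$. I expect step (iii) — handling $[G:\langle x\rangle]=3$ cleanly — to be the main obstacle: one must be careful that passing to $G/K$ does not lose the relevant structure, and verify that the short list of groups of order $\le 6$ with a cyclic index-$3$ subgroup all have $\psi''<\tfrac13$ (e.g. $\psi''(S_3)=\tfrac{13}{36}<\tfrac13$, $\psi''(C_6)=\tfrac{21}{36}$ but $C_6$ is cyclic, and the order-$3$ quotient is cyclic). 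Once the $[G:\langle x\rangle]=3$ case is excluded, the dichotomy ``$G$ cyclic or $[G:\langle x\rangle]=2$'' follows, with $x$ of maximal order serving as the required element.
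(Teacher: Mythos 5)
Your first paragraph already contains the paper's entire proof, and your instinct that this makes the lemma ``nearly trivial'' should have been trusted rather than overridden. The only ingredient missing there is strictness: writing $m=\max_{g\in G}o(g)$, the identity contributes $o(1)=1$, so for $|G|>1$ one has $\psi(G)=\sum_{g\in G}o(g)\leq 1+(|G|-1)m<|G|\,m$. If $m\leq\frac{|G|}{3}$ this gives $\psi(G)\leq 1+(|G|-1)\frac{|G|}{3}<\frac{|G|^2}{3}$ whenever $|G|>3$ (and groups of order at most $3$ are cyclic), contradicting $\psi''(G)\geq\frac{1}{3}$. Hence some $x$ satisfies $o(x)>\frac{|G|}{3}$, i.e.\ $[G:\langle x\rangle]<3$, and the troublesome case $[G:\langle x\rangle]=3$ simply never occurs. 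That is exactly the paper's argument.

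The extra machinery you introduce to handle the index-$3$ case is both unnecessary and, as written, broken in three places. First, the inequality ``every element not of maximal order satisfies $o(g)\leq m/2$'' is false: in $S_3$ the maximal order is $m=3$, while the transpositions have order $2>\frac{3}{2}$ (element orders need not divide the maximal element order). Second, the numerical claim $\psi''(S_3)=\frac{13}{36}<\frac{1}{3}$ is wrong, since $\frac{13}{36}>\frac{12}{36}=\frac{1}{3}$; likewise $\psi''(C_6)=\frac{21}{36}>\frac{1}{3}$, so the monotonicity $\psi''(G)\leq\psi''(G/K)$ produces no contradiction for either group of order $6$, and your step (iii) does not close the index-$3$ case. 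Third, the concluding clause --- that $A=K$ normal of index $3$ ``loops back into the $[G:A]\leq 2$ case'' --- does not follow: a cyclic normal subgroup of index $3$ yields no cyclic subgroup of index at most $2$. None of this matters once the averaging bound is taken in its strict form, but as submitted the proof is incomplete.
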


\begin{proof}
From the condition $\psi''(G)\geq\frac{1}{3}$ we infer that there exists $x\in G$ such that $o(x)>\frac{|G|}{3}\,$. This leads to $[G:\langle x\rangle]<3$, that is $[G:\langle x\rangle]\in\{1,2\}$. Then either $G=\langle x\rangle$ is cyclic or $[G:\langle x\rangle]=2$, as desired.
\end{proof}

\begin{lemma}
Let $G$ be a finite $2$-group having a cyclic maximal subgroup. Then the following hold:
\begin{itemize}
\item[{\rm a)}] If $\psi''(G)>\frac{7}{16}\,$, then $G$ is cyclic;
\item[{\rm b)}] If $\psi''(G)>\frac{27}{64}\,$, then $G$ is cyclic or $G\cong C_2\times C_2$;
\item[{\rm c)}] If $\psi''(G)>\frac{13}{36}\,$, then $G$ is cyclic or $G\cong C_2\times C_2$ or $G\cong Q_8$.
\end{itemize}
\end{lemma}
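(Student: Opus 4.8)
The plan is to reduce the statement to the classical classification of finite $2$-groups that possess a cyclic maximal subgroup, and then to evaluate $\psi''$ on each of the resulting families. Since all three conclusions permit $G$ to be cyclic, I may assume $G$ is non-cyclic; write $|G|=2^n$. If $n\le 2$ then necessarily $G\cong C_2\times C_2$, and $\psi''(G)=\frac{7}{16}$. For $n\ge 3$ the structure theorem for $2$-groups with a cyclic maximal subgroup (standard references on finite $p$-groups) gives that $G$ is isomorphic to one of
\[
C_{2^{n-1}}\times C_2,\qquad D_{2^n},\qquad Q_{2^n},\qquad\text{and, for }n\ge 4,\quad SD_{2^n}\ \text{or}\ M_{2^n}
\]
(the remaining possibility $C_{2^n}$ being cyclic). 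So it suffices to compute $\psi''$ on these finitely many families.

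For the cyclic maximal subgroup $C_{2^{n-1}}$ I use Lemma 1.2(1): $\psi(C_{2^{n-1}})=\frac{2^{2n-1}+1}{3}$. The orders of the $2^{n-1}$ remaining elements are then read off directly: in $D_{2^n}$ all of them are involutions; in $Q_{2^n}$ all of them have order $4$; in $SD_{2^n}$ exactly $2^{n-2}$ of them are involutions and $2^{n-2}$ have order $4$; and $M_{2^n}$ has the same number of elements of each order as $C_{2^{n-1}}\times C_2$. This produces
\[
\psi''(C_{2^{n-1}}\times C_2)=\psi''(M_{2^n})=\frac13+\frac{5}{3\cdot 2^{2n}},\qquad
\psi''(D_{2^n})=\frac16+\frac{1}{2^n}+\frac{1}{3\cdot 2^{2n}},
\]
\[
\psi''(Q_{2^n})=\frac16+\frac{2}{2^n}+\frac{1}{3\cdot 2^{2n}},\qquad
\psi''(SD_{2^n})=\frac16+\frac{3}{2^{n+1}}+\frac{1}{3\cdot 2^{2n}}.
\]

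Each of these is strictly decreasing in $n$, so its largest value over the admissible range is attained at the least admissible $n$: namely $\psi''(C_2\times C_2)=\frac{7}{16}$, $\psi''(C_4\times C_2)=\frac{23}{64}$, $\psi''(D_8)=\frac{19}{64}$, $\psi''(Q_8)=\frac{27}{64}$, $\psi''(Q_{16})=\frac{75}{256}$, $\psi''(SD_{16})=\frac{67}{256}$, and $\psi''(M_{16})=\psi''(C_8\times C_2)=\frac{87}{256}$. Comparing with the thresholds and using $\frac{23}{64}<\frac{13}{36}<\frac{27}{64}<\frac{7}{16}$ together with $\frac{87}{256}<\frac{13}{36}$: no non-cyclic $G$ satisfies $\psi''(G)>\frac{7}{16}$, which gives (a); the only non-cyclic $G$ with $\psi''(G)>\frac{27}{64}$ is $C_2\times C_2$ (note $\psi''(Q_8)=\frac{27}{64}$ is not strictly larger), which gives (b); and the only non-cyclic $G$ with $\psi''(G)>\frac{13}{36}$ are $C_2\times C_2$ and $Q_8$, which gives (c).

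The main obstacle is bookkeeping rather than ideas: one must pin down the element-order distributions of the semidihedral and modular families exactly — in particular the coincidence $\psi(M_{2^n})=\psi(C_{2^{n-1}}\times C_2)$ — and be careful with the boundary numerics, since the three bounds are attained precisely by extremal groups. As a shortcut, parts (a) and (b) can be proved without invoking the classification at all: a non-cyclic $2$-group with a cyclic maximal subgroup has every element of order at most $|G|/2$, so $\psi(G)\le \psi(C_{2^{n-1}})+2^{n-1}\cdot 2^{n-1}$ and hence $\psi''(G)\le \frac{5}{12}+\frac{1}{3\cdot 2^{2n}}$, which equals $\frac{7}{16}$ for $n=2$, equals $\frac{27}{64}$ for $n=3$, and is strictly smaller for $n\ge 4$; only part (c) seems to genuinely require the full list of groups.
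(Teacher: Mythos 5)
Your proof is correct and takes essentially the same route as the paper: invoke the classification of $2$-groups with a cyclic maximal subgroup and compute $\psi''$ on each family ($C_{2^{n-1}}\times C_2$, $D_{2^n}$, $Q_{2^n}$, $SD_{2^n}$, $M_{2^n}$), with all of your values agreeing with the paper's except that your $\psi''(M_{2^n})=\frac{1}{3}+\frac{5}{3\cdot 2^{2n}}$ corrects an evident misprint there (the paper's $\psi''(M(2^n))=\frac{1}{2^n}$ would force $\psi(G)=|G|$, which is impossible for a non-trivial group, and the correct value still lies below $\frac{13}{36}$ for $n\geq 4$). Your classification-free shortcut for parts a) and b) is a small bonus not present in the paper.
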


\begin{proof}
Assume that $G$ is not cyclic and let $n=|G|$. Then, by Theorem 4.1 of \cite{13}, II, we infer that either $G$ is abelian of type $C_2\times C_{2^{n-1}}$, $n\geq 2$, or non-abelian of one of the following types:
\begin{itemize}
\item[-] $M(2^{n})=\langle x,y\mid x^{2^{n-1}}=y^2=1\,, yxy=x^{2^{n-2}+1}\rangle$, $n\geq 4$;
\item[-] $D_{2^{n}}=\langle x,y\mid x^{2^{n-1}}=y^2=1\,, yxy=x^{-1}\rangle$, $n\geq 3$;
\item[-] $Q_{2^{n}}=\langle x,y\mid x^{2^{n-1}}=y^4=1\,, yxy^{-1}=x^{2^{n-1}-1}\rangle$, $n\geq 3$;
\item[-] $S_{2^{n}}=\langle x,y\mid x^{2^{n-1}}=y^2=1\,, yxy=x^{2^{n-2}-1}\rangle$, $n\geq 4$.
\end{itemize}If $G\cong C_2\times C_{2^{n-1}}$, then
\begin{equation}
\psi''(G)=\frac{2^{2n}+5}{3\cdot 2^{2n}}>\frac{13}{36}\Leftrightarrow 2^{2n}<60\Leftrightarrow n=2, \mbox{ i.e. } G\cong C_2\times C_2,\nonumber
\end{equation}while if $G$ is non-abelian, then we get:
\begin{itemize}
\item[-] $\psi''(M(2^n))=\displaystyle\frac{1}{2^n}<\frac{13}{36}\,,\,\forall\, n\geq 4$;
\item[-] $\psi''(D_{2^n})=\displaystyle\frac{2^{2n-1}+3\cdot 2^n+1}{3\cdot 2^{2n}}<\frac{13}{36}\,,\,\forall\, n\geq 3$;
\item[-] $\psi''(Q_{2^n})=\displaystyle\frac{2^{2n-1}+3\cdot 2^{n+1}+1}{3\cdot 2^{2n}}>\frac{13}{36}\Leftrightarrow n=3$, i.e. $G\cong Q_8$;
\item[-] $\psi''(S_{2^n})=\displaystyle\frac{2^{2n-1}+9\cdot 2^{n-1}+1}{3\cdot 2^{2n}}<\frac{13}{36}\,,\,\forall\, n\geq 4$.
\end{itemize}This completes the proof.
\end{proof}

\begin{lemma}
Let $G$ be a non-trivial semidirect product of $C_{p^n}$ by $C_2$, where $p$ is an odd prime and $n$ is a positive integer. Then $\psi''(G)\leq\frac{13}{36}\,$, and the equality occurs if and only if $p=3$ and $n=1$, i.e. $G\cong S_3$.
\end{lemma}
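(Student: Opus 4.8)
The plan is to split into two cases according to whether the action of $C_2$ on $C_{p^n}$ is trivial or not, and in each case to compute $\psi''(G)$ exactly as a function of $p$ and $n$ and then bound it.

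First, if the action is trivial then $G\cong C_2\times C_{p^n}$. Since $\gcd(2,p^n)=1$, Lemma 2.4(2) gives $\psi(G)=\psi(C_2)\psi(C_{p^n})=3\cdot\frac{p^{2n+1}+1}{p+1}$ by Lemma 2.4(1), and hence
\begin{equation}
\psi''(G)=\frac{3(p^{2n+1}+1)}{(p+1)\,4p^{2n}}=\frac{3}{4}\cdot\frac{p^{2n+1}+1}{(p+1)p^{2n}}.\nonumber
\end{equation}
I would then observe that $\frac{p^{2n+1}+1}{(p+1)p^{2n}}=\frac{p}{p+1}+\frac{1}{(p+1)p^{2n}}$, which is decreasing in both $p$ (for $p\geq 3$) and $n$, so its maximum over odd primes $p$ and $n\geq 1$ is attained at $p=3$, $n=1$, giving $\psi''(G)=\frac{3}{4}\cdot\frac{28}{36}=\frac{7}{12}$. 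But wait — $C_2\times C_3\cong C_6$ is cyclic, not a nontrivial semidirect product in the intended sense; the statement surely means the action is nontrivial, so this abelian case should either be excluded or handled by noting $\frac{7}{12}>\frac{13}{36}$ forces... Actually I would simply treat the nonabelian case as the substantive one and, if the abelian case is admitted, note that $C_2\times C_{p^n}$ with $p$ odd is cyclic of order $2p^n$ and so does not arise as a \emph{non-trivial} semidirect product; the hypothesis "non-trivial semidirect product" is what rules it out.

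So the main case is $G=C_{p^n}\rtimes C_2$ with $C_2$ acting nontrivially, i.e. by inversion on some subgroup. I would apply Lemma 2.4(5) with $P=C_{p^n}$, $H=C_2$, which applies since $P$ is a cyclic $p$-group, $|H|=2>1$ and $\gcd(p,2)=1$. This yields $\psi(G)=p^n\psi(C_2)+(\psi(C_{p^n})-p^n)\psi(C_H(P))$. Here $C_H(P)=C_H(C_{p^n})$ is either trivial or all of $H$; a nontrivial action forces $C_H(P)=1$, so $\psi(C_H(P))=\psi(\{1\})=1$. Hence $\psi(G)=3p^n+\psi(C_{p^n})-p^n=2p^n+\frac{p^{2n+1}+1}{p+1}$, and
\begin{equation}
\psi''(G)=\frac{1}{4p^{2n}}\left(2p^n+\frac{p^{2n+1}+1}{p+1}\right).\nonumber
\end{equation}
The last step is the monotonicity analysis: I would show this expression is a decreasing function of $p$ over odd primes and of $n\geq 1$, by writing it as $\frac{1}{2p^n}+\frac{1}{4}\cdot\frac{p}{p+1}+\frac{1}{4(p+1)p^{2n}}$ — each term is manifestly decreasing in $n$, and term-by-term one checks decrease in $p$ as well. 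Thus the maximum is at $p=3$, $n=1$, where $\psi(S_3)=2\cdot 3+\frac{28}{4}=6+7=13$ and $|S_3|^2=36$, giving $\psi''(S_3)=\frac{13}{36}$; for all other $(p,n)$ the value is strictly smaller, and equality holds iff $p=3$, $n=1$, i.e. $G\cong S_3$.

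The only real obstacle is the monotonicity verification in $p$: one must be slightly careful because $p$ ranges over primes $\geq 3$ rather than a continuum, but treating $\psi''$ as a function of a real variable $p\geq 3$ and differentiating (or just comparing the closed forms at $p=3$ versus $p\geq 5$, and separately $n=1$ versus $n\geq 2$) settles it cleanly. Everything else is a direct substitution into Lemma 2.4.
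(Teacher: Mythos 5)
Your setup is exactly the paper's: the trivial-action case is indeed excluded by the hypothesis, and the substantive computation via Lemma 1.2(5) with $C_H(P)=1$ gives the same formula $\psi(G)=2p^n+\frac{p^{2n+1}+1}{p+1}$ and hence the same expression for $\psi''(G)$. The paper then clears denominators and shows that the polynomial $f(x)=4x^{2n+1}+13x^{2n}-18x^{n+1}-18x^n-9$ is increasing on $[3,\infty)$ with $f(3)=25\cdot 3^{2n}-72\cdot 3^n-9\geq 0$, whereas you keep the rational form $\psi''(G)=\frac{1}{2p^n}+\frac{p}{4(p+1)}+\frac{1}{4(p+1)p^{2n}}$ and argue by monotonicity in $p$ and $n$ separately. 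That decomposition is correct and arguably more transparent than the paper's polynomial manipulation.

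However, your monotonicity claim in $p$ as stated is false: the middle term $\frac{p}{4(p+1)}=\frac14\bigl(1-\frac{1}{p+1}\bigr)$ is \emph{increasing} in $p$, so the asserted term-by-term decrease in $p$ fails, and in fact $\psi''(G)$ is not monotone in $p$ for fixed $n\geq 2$ (e.g.\ for $n=2$ it decreases from $p=3$ to $p=5$ and then creeps back up toward $\frac14$). The argument is easily repaired, but the order of the two reductions matters: first use that the first and third terms are strictly decreasing in $n$ (and the second is independent of $n$) to reduce to $n=1$; then observe that at $n=1$ the expression collapses to $\frac{1}{2p}+\frac{p}{4(p+1)}+\frac{1}{4(p+1)p^2}=\frac14+\frac{p+1}{4p^2}$, which \emph{is} strictly decreasing in $p\geq 3$ and equals $\frac{13}{36}$ at $p=3$. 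With that correction the equality analysis ($n=1$ and $p=3$ forced by the strict decreases) goes through and the proof is complete.
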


\begin{proof}
Lemma 1.2, 5), shows that
\begin{equation}
\psi(G)=\psi(C_{p^n}\rtimes C_2)=p^n\psi(C_2)+(\psi(C_{p^n})-p^n)\psi(C_{C_2}(C_{p^n})).\nonumber
\end{equation}Since the semidirect product $C_{p^n}\rtimes C_2$ is non-trivial, we have $C_{C_2}(C_{p^n})=1$ and so
\begin{equation}
\psi(G)=3\cdot p^n+\frac{p^{2n+1}+1}{p+1}-p^n=\frac{p^{2n+1}+2\cdot p^{n+1}+2\cdot p^{n}+1}{p+1}\,,\nonumber
\end{equation}i.e.
\begin{equation}
\psi''(G)=\frac{p^{2n+1}+2\cdot p^{n+1}+2\cdot p^{n}+1}{4(p^{2n+1}+p^{2n})}\,.\nonumber
\end{equation}Now, the inequality $\psi''(G)\leq\frac{13}{36}$ is equivalent with
\begin{equation}
4\cdot p^{2n+1}+13\cdot p^{2n}-18\cdot p^{n+1}-18\cdot p^n-9\geq 0.\nonumber
\end{equation}We easily observe that the following function on the real variable
\begin{equation}
f(x)=4\cdot x^{2n+1}+13\cdot x^{2n}-18\cdot x^{n+1}-18\cdot x^n-9\nonumber
\end{equation}is strictly increasing on $[3,\infty)$. Consequently,
\begin{equation}
f(x)\geq f(3)=25\cdot 3^{2n}{-}72\cdot 3^{n}{-}9\geq 25\cdot 3^{n+1}{-}72\cdot 3^{n}{-}9=3^{n+1}{-}9\geq 0\nonumber
\end{equation}and we have equality if and only if $x=3$ and $n=1$, completing the proof.
\end{proof}

We are now able to prove our main result.

\bigskip\noindent{\bf Proof of Theorem 1.1.} We first prove item c). Since $\psi''(G)\geq\frac{13}{36}\geq\frac{1}{3}\,$, by Lemma 2.1 it follows that either $G$ is cyclic or there exists $x\in G$ such that $[G:\langle x\rangle]=2$. Let $n=|G|=2^{n_1}p_2^{n_2}\cdots p_k^{n_k}$, where $p_2<p_3<\cdots<p_k$ are odd primes. Then, for each $i=2,...,k$, $\langle x\rangle$ contains a cyclic Sylow $p_i$-subgroup $P_i$ of $G$. We have $\langle x\rangle\leq N_G(P_i)$ and so $P_i$ is normal in $G$. Therefore $G$ has a cyclic normal $2$-complement, that is\newpage
\begin{equation}
G=C_m\rtimes H,\nonumber
\end{equation}where $m=\frac{n}{2^{n_1}}$ is odd and $H$ is a Sylow $2$-subgroup of $G$. Note that $\langle x\rangle$ also contains a cyclic normal subgroup $M$ of order $2^{n_1-1}$, that is $H$ possesses a cyclic maximal subgroup. Assume that $G$ is not nilpotent. We will show that $\psi''(G)\leq\frac{13}{36}\,$, contradicting our hypothesis. We infer that there exists $i\in\{2,...,k\}$ such that the semidirect product $C_{p_i^{n_i}}\rtimes H$ is non-trivial, and by property (1) we may assume that $G=C_{p_i^{n_i}}\rtimes H$. Then $G/M$ is a non-trivial semidirect product of $C_{p_i^{n_i}}$ by $C_2$, and (1) and Lemma 2.3 imply that
\begin{equation}
\psi''(G)\leq\psi''(\frac{G}{M})=\psi''(C_{p_i^{n_i}}\rtimes C_2)\leq\frac{13}{36}\,,\nonumber
\end{equation}as desired. Consequently, $G$ is nilpotent.

Next we prove items a) and b). If $\psi''(G)>\frac{7}{16}\,$, then $G$ is nilpotent by c). Under the above notations, we have $G=C_m\times H$ and
\begin{equation}
\psi''(H)=\psi''(\frac{G}{C_m})\geq\psi''(G)>\frac{7}{16}\nonumber
\end{equation}implies that $H$ is cyclic by Lemma 2.2. Thus $G$ is cyclic. Similarly, if $\psi''(G)>\frac{27}{64}\,$, then one obtains $G=C_m\times H$, where either $H$ is cyclic or $H\cong C_2\times C_2$. Thus $G$ is abelian.

We prove now item d). We proceed by induction on $|G|$. Since cyclic-by-supersolvable groups are supersolvable, it suffices to show that $G$ contains a non-trivial cyclic normal subgroup $C$. Indeed, in this case we would have
\begin{equation}
\psi''(\frac{G}{C})\geq\psi''(G)>\frac{31}{144}\nonumber
\end{equation}and so $G/C$ would be supersolvable by the inductive hypothesis. It is clear that the condition $\psi''(G)>\frac{31}{144}$ implies that there exists $x\in G$ such that
\begin{equation}
[G:\langle x\rangle]<\frac{144}{31}\,, \mbox{ i.e. } [G:\langle x\rangle]\in\{1,2,3,4\}.\nonumber
\end{equation}Obviously, we can choose $C=\langle x\rangle$ for $[G:\langle x\rangle]\in\{1,2\}$. Assume that $[G:\langle x\rangle]=3$. If ${\rm Core}_G(\langle x\rangle)\neq 1$, then we can choose $C={\rm Core}_G(\langle x\rangle)$, while if ${\rm Core}_G(\langle x\rangle)=1$ we infer that $G$ is supersolvable because it can be embedded in $S_3$. Assume now that $[G:\langle x\rangle]=4$. Again, if ${\rm Core}_G(\langle x\rangle)\neq 1$, then we can choose $C={\rm Core}_G(\langle x\rangle)$, while if ${\rm Core}_G(\langle x\rangle)=1$ we infer that $G$ can be embedded in $S_4$. Since $A_4$ and $S_4$ are the unique non-supersolvable subgroups of $S_4$ and
\begin{equation}
\psi''(S_4)=\frac{67}{576}<\psi''(A_4)=\frac{31}{144}<\psi''(G)\,,\nonumber
\end{equation}it follows that $G$ is supersolvable.

Finally, we prove item e). Similarly with d), it suffices to show that $G$ contains a non-trivial cyclic normal subgroup. The condition $\psi''(G)>\frac{211}{3600}$ implies that there exists $x\in G$ such that
\begin{equation}
[G:\langle x\rangle]\leq 17.\nonumber
\end{equation}If $[G:\langle x\rangle]\in\{16,17\}$, then the conclusion follows by Theorem 1.4. So, we can suppose that 
\begin{equation}
[G:\langle x\rangle]\leq 15.\nonumber
\end{equation} 
If $|G|\geq 225$, then
\begin{equation}
|\langle x\rangle|\geq\frac{225}{[G:\langle x\rangle]}\geq [G:\langle x\rangle]\nonumber
\end{equation}and therefore ${\rm Core}_G(\langle x\rangle)$ is a non-trivial cyclic normal subgroup of $G$ by Theorem 1.3. Suppose now that $|G|<225$ and that $G$ is non-solvable. Then one of the following holds:
\begin{itemize}
\item[(i)] $|G|=60$ and $G\cong A_5$;
\item[(ii)] $|G|=120$ and $G\cong A_5\times C_2$ or $G\cong S_5$ or $G\cong {\rm SL}(2,5)$;
\item[(iii)] $|G|=168$ and $G\cong {\rm PSL}(2,7)$;
\item[(iv)] $|G|=180$ and $G\cong A_5\times C_3$.
\end{itemize}If $G\cong A_5\times C_n$ with $n=2,3$, then Lemma 1.2, 3), leads to 
\begin{equation}
\psi''(G)\leq\psi''(A_5)\psi''(C_n)<\psi''(A_5),\nonumber
\end{equation}a contradiction. Using GAP, in the other cases we get
\begin{itemize}
\item[] \hspace{32mm} $\psi''(S_5)=\frac{471}{14400}<\frac{211}{3600}\,$,
\item[] \hspace{32mm} $\psi''({\rm SL}(2,5))=\frac{663}{14400}<\frac{211}{3600}\,$,
\item[] \hspace{32mm} $\psi''({\rm PSL}(2,7))=\frac{715}{28224}<\frac{211}{3600}\,$,
\end{itemize}contradicting again the hypothesis.

The proof of Theorem 1.1 is now complete.\qed

\vspace*{3ex}\small

\hfill
\begin{minipage}[t]{5cm}
Marius T\u arn\u auceanu \\
Faculty of  Mathematics \\
``Al.I. Cuza'' University \\
Ia\c si, Romania \\
e-mail: {\tt tarnauc@uaic.ro}
\end{minipage}

\end{document}